\theoremstyle{definition}
\newtheorem{dfn}{Definition}
\newtheorem{defin}[dfn]{Definition}
\newtheorem{rem}[dfn]{Remark}
\newtheorem*{xrem}{Remark}
\theoremstyle{plain}
\newtheorem{thm}[dfn]{Theorem}
\newtheorem{prop}[dfn]{Proposition}
\newcommand{\N}{{\mathbb N}}
\newcommand{\1}{{\mathbbm 1}}
\newcommand{\esssup}{\operatorname{esssup}}
\DeclareMathOperator{\diam}{diam}
\numberwithin{equation}{section}
\title[Hausdorff measure by recurrence]{Estimating the Hausdorff measure using recurrence}
\author{\L ukasz Pawelec}
\subjclass[2010]{37A05, 37A25}
\keywords{Poincar\'{e} theorem, recurrence, Hausdorff measure}
\begin{document}
\date{}
\begin{abstract}
We show a new method of estimating the Hausdorff measure of a set from below. The method requires computing the subsequent closest return times of a point to itself.
\end{abstract}
\address{{\L}ukasz Pawelec, Department of Mathematics and Mathematical Economics, SGH Warsaw School of Economics, 
al.~Niepodleg\l{}o\'{s}ci~162, 02-554 Warszawa, Poland}
\email{LPawel@sgh.waw.pl}
\maketitle

\section{Introduction}
Let $(X,d)$ be a separable metric space and $(T,\mu)$ a transformation preserving a Borel, probability measure. The classical Poincar\'{e} lemma in such a setting gives that \[\displaystyle\liminf_{n\to\infty}d(x,T^n(x))=0 \mbox{\; for $\mu$--almost every $x$}.\] The historically  first attempt at strengthening this result came in a paper by M.~Boshernitzan \cite{Bosh}, who proved that $d(x,T^n(x))\approx n^{-1/\beta}$, where $\beta$ is the Hausdorff dimension of $X$. Precisely speaking, he gave two results, which we state now.
 
For a dynamical system $(X,T)$ preserving a probability Borel measure $\mu$.
\begin{align}\label{eq:wstepbosh1}
&\mbox{If $H_\beta(X) <+\infty$, then \;} \liminf_{n\to \infty} \;n^{1/\beta}d(T^n(x),x) < +\infty \mbox{\quad for $\mu$--a.e. $x$.}\\
\label{eq:wstepbosh2}
&\mbox{If $H_\beta(X) =0$, then \;} \liminf_{n\to \infty} \;n^{1/\beta}d(T^n(x),x) =0 \mbox{\quad for $\mu$--a.e. $x$.}
\end{align}  

 The second result from that paper states, that if the preserved probability measure $\mu=H_\beta$, then
\begin{equation}\label{eq:wstepbosh3}
\liminf_{n\to \infty} \;n^{1/\beta}d(T^n(x),x) \leq 1 \mbox{\quad for $\mu$--a.e. $x$.}
\end{equation}

%e.g. introducing the notions of lower and upper recurrence rates, defined as
%\[\underline{R}(x)=\liminf_{r\to 0}\frac{\log\tau_{B(x,r)}(x)}{-\log(r)} \mbox{\hskip0.5cm and \hskip0.5cm} \overline{R}(x)=\liminf_{r\to 0}\frac{\log\tau_{B(x,r)}(x)}{-\log(r)},\]
%where $\tau_U(x)$ is the first return time of point $x$ into $U$.
%
%For various systems those recurrence rates equal lower and upper pointwise dimensions. 
There has been a lot of development in the area, for an introduction into quantitative recurrence, see e.g. \cite{BS}. 

In this paper we will be interested in showing some new bounds on the recurrence speed. We will prove a generalisation of Boshernitzan's result, but the main new idea is to show how to use this improved result to get an estimate from below of the Hausdorff measure of a fractal set. We discuss this on an easy example. An upcoming paper with M.~Urba\'{n}ski \cite{PU} shows a more interesting application, namely for Cantor sets defined by the so-called Denjoy maps (i.e. we show a bound from below on the Hausdorff measure of the minimal set occurring for a $\mathcal{C}^{1+\alpha}$ diffeomorphism on the circle which is \emph{only} semi-conjugate to a rotation). 

The idea of the method comes from the author's PhD Thesis.

The paper is organised as follows. In the next section we give the needed definitions, state the relevant theorems and sketch the new technique. In Section \ref{sec:ex} we show the method of estimating the Hausdorff measure on an example. Section \ref{sec:impr} is filled with additional comments, improvements and limitations of the method. Finally, Section \ref{sec:proof} is devoted to the proof of Theorem \ref{thm:main}.

\section{Definitions and Theorems}
Throughout this paper we will assume that $(X,d)$ is a metric space and $T\colon~X\to~X$ a Borel measurable map; $\mu$ is a $T$--invariant, ergodic, probability, Borel measure on $X$.

As we are working with subtle measure estimates it seems prudent to put here the precise definitions in use in this paper.

We will use the (most common) version of the definition of the Hausdorff measure.
\begin{defin}The outer Hausdorff measure is the following
\[H_\beta(Y)= \lim_{r\to 0} \inf\left\{\sum_{k=1}^\infty (\diam U_k)^\beta : \forall_k \diam U_k <r \mbox{ and } Y\subset \bigcup_{k=1}^\infty U_k\right\},\]
where the infimum is take over all countable covers of $Y$ satisfying the conditions as stated. By Carath\'eodory's extension this gives the (typical) Hausdorff measure. 
\end{defin}
The next definition is also standard.
\begin{defin}
The Hausdorff dimension of the set $Y$ is given by the formula
\[\dim_H(Y)=\inf\{\beta \geq 0 : H_\beta(Y)=0\}.\]
\end{defin} 

We will now state a new version of Boshernitzan's estimate \eqref{eq:wstepbosh3}. In contrast to his result we do not assume that the preserved measure $\mu=H_\beta$.
\begin{thm}\label{thm:main}
With the assumptions on the dynamical system as above, for any $\alpha>0$ and for $\mu$ -- almost every $x\in X$ we have
%\begin{equation}\label{eq:thmain}
\begin{align}
&\liminf_{n\to \infty} n\Big(d(T^n(x),x)\Big)^\alpha\leq g(x):=\limsup\limits_{r\to 0}\frac{H_\beta(B(x,r))}{\mu(B(x,r))}
\label{eq:thmain}
\end{align}
%\end{equation}
%where $g(x)=\lim\limits_{r\to 0}(\esssup \frac{dH_\alpha}{d\mu}|_{B(x,r)})$.
\end{thm}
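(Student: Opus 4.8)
The plan is to pass to $\alpha$-th powers and prove the equivalent statement that $\liminf_{n\to\infty} n\,d(T^n(x),x)^\alpha \le g(x)$ for $\mu$-a.e.\ $x$. The left-hand side is governed by the successive closest returns of the orbit, so I would first reduce it to a statement about first-return times into balls: writing $\tau_r(x)=\min\{n\ge 1: T^n(x)\in B_x(r)\}$, a first return satisfies $d(T^{\tau_r(x)}(x),x)\le r$, whence $\liminf_n n\,d(T^n(x),x)^\alpha\le\liminf_{r\to 0}\tau_r(x)\,r^\alpha$. These two are in fact equal, the left-hand liminf being realized along the record radii $r_k=d(T^{n_k}(x),x)$ (for a non-record $n$ the earlier record dominates the value $n\,d(T^n(x),x)^\alpha$). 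Thus it suffices to control $\tau_r(x)\,r^\alpha$ along a sequence $r\to 0$.

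The engine for bounding return times is Kac's lemma, which is exact and hence compatible with the sharp constant $g(x)$. To make the Hausdorff measure appear I would not return to the ball itself but to a finer piece of a near-optimal Hausdorff cover. Fixing a small scale, choose a countable Borel partition $\{W_l\}$ of $X$ with $\operatorname{diam}W_l$ small and $\sum_l(\operatorname{diam}W_l)^\alpha$ nearly realizing the Hausdorff measure at that scale, while $\sum_l\mu(W_l)=1$. For $x$ in an atom $W$, the first return to $W$ yields an $n=\tau_W(x)$ with $d(T^n(x),x)\le\operatorname{diam}W$, hence $n\,d(T^n(x),x)^\alpha\le\tau_W(x)(\operatorname{diam}W)^\alpha$. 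By Kac's lemma $\int_W\tau_W\,d\mu=1$, so $\int_W\tau_W(x)(\operatorname{diam}W)^\alpha\,d\mu(x)=(\operatorname{diam}W)^\alpha$; dividing by $\mu(W)$, the average of the return bound over $W$ equals $(\operatorname{diam}W)^\alpha/\mu(W)$, which the optimality of the cover ties to the local ratio $H_\alpha(B_x(r))/\mu(B_x(r))$, i.e.\ to $g(x)$ in the limit of small scales. Note that this produces returns faster than the naive $1/\mu(B_x(r))$ precisely when $H_\alpha(B_x(r))\ll r^\alpha$, which is why the bound lands on $g(x)$ rather than on $r^\alpha/\mu(B_x(r))$.

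From here I would localize and upgrade to an almost-everywhere liminf statement. A Chebyshev bound inside each atom shows that, off a set of small measure, $\tau_W(x)(\operatorname{diam}W)^\alpha$ is comparable to $(\operatorname{diam}W)^\alpha/\mu(W)$, and letting the scale tend to zero this comparison is driven down to $g(x)$ by the very definition of $g$ as a $\limsup$ of $H_\alpha(B_x(r))/\mu(B_x(r))$. Running this over a sequence of scales $r\to 0$ and taking the upper limit of the resulting good sets produces, for $\mu$-a.e.\ $x$, infinitely many scales at which a genuine return witnesses $n\,d(T^n(x),x)^\alpha\le g(x)+\epsilon$; a countable intersection over rational $\epsilon$ then yields the theorem.

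The main obstacle is twofold and lies in the middle step. First, one must manufacture partitions whose atoms are simultaneously fine and Hausdorff-efficient, so that $(\operatorname{diam}W)^\alpha$ is genuinely comparable to the mass that $H_\alpha$ sees on $W$ rather than merely dominating it; controlling the overlap lost in turning a near-optimal cover into a partition is delicate. Second, and more seriously, the constant produced by the Chebyshev step is a priori larger than $1$, whereas the statement demands the sharp value $g(x)$. Removing this loss forces one to exploit the freedom of the $\liminf$ over infinitely many, nearly decoupled, scales -- so that it suffices for the favourable event to have large (not full) measure at each scale -- and to handle with care the fact that $g$ is defined through a $\limsup$, which must be realized along the same radii for which the return estimate is good.
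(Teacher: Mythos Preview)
Your approach is genuinely different from the paper's, and the gap you yourself flag---the loss of the sharp constant in the Chebyshev step---is real and is not repaired by the sketch that follows. The ``decoupling of scales'' you invoke is simply not available in this generality: the theorem must hold for systems with \emph{no} mixing whatsoever (irrational rotations, adding machines), where the return-time events at different scales are as correlated as they can be. A Borel--Cantelli-type upgrade from ``large measure at each scale'' to ``full measure in the $\limsup$'' needs some quantitative independence, and none is assumed; with a fixed Chebyshev loss the exceptional probabilities are not summable either, so the convergence half of Borel--Cantelli does not help. There is a second, subtler gap earlier: a globally near-optimal Hausdorff cover need not be near-optimal atom by atom, so identifying $(\operatorname{diam} W)^\alpha/\mu(W)$ with the local ratio $H_\alpha(B_x(r))/\mu(B_x(r))$ for the atom $W$ containing a \emph{typical} $x$ is unjustified. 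Your Kac averaging in fact yields only the global bound $\sum_l(\operatorname{diam}W_l)^\alpha\approx H_\alpha(X)$, not $g(x)$.

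The paper avoids both problems. For the sharp constant it argues by contradiction rather than by averaging: assuming the bad set $D=\{x:\liminf_n n^{1/\alpha}d(T^n(x),x)>c^{1/\alpha}\}$ (with $c=\operatorname{esssup}g$) has positive measure, it finds inside $D$ a \emph{single} small set $U$ with $(1-\varepsilon)(\operatorname{diam}U)^\alpha\le H_\alpha(U)\le c\,\mu(U)$, and uses the elementary pigeonhole $n\le 1+1/\mu(U)$ for the first $n$ with $T^{-n}U\cap U\ne\emptyset$. This exhibits \emph{one} point $x\in U\subset D$ with $n^{1/\alpha}d(T^n(x),x)\le c^{1/\alpha}+o(1)$, already contradicting $x\in D$; no Chebyshev, no constant lost. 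For the localization from $\operatorname{esssup}g$ down to $g(x)$ the paper does not use Hausdorff-efficient partitions at all: it induces on the ball $B(x,r)$, applies the esssup bound to the first-return system (whose invariant measure is $\mu$ conditioned on the ball, with density $g|_{B(x,r)}\cdot\mu(B(x,r))$ against $H_\alpha$), and converts induced time $k$ to real time $n_k$ via the Birkhoff ergodic theorem $k/n_k\to\mu(B(x,r))$, then lets $r\to0$. That ergodic-theorem step is precisely where ergodicity enters and where the sharp constant survives; it replaces the decoupling you were hoping for.
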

\begin{xrem}
Note that $g(x)$ may be equal to $0$ or $+\infty$. The statement still holds.
\end{xrem}
The rather simple proof utilises the idea by M.~Boshernitzan and some techniques from ergodic theory. We postpone it till the last section.

\bigskip

This result shows that the behaviour of the recurrence is (may be) governed by the Hausdorff measure of the space. We will try to apply this in a reverse manner: if we could compute/estimate the lower limit of the speed of recurrence, then this would give us some information on the Hausdorff measure.

More precisely, if we can show that the lower limit on the LHS of \eqref{eq:thmain} is  positive for some $\alpha>0$, then we will get the lower bound on the density (and so on the $\alpha$--Hausdorff measure of the space). Also, this would trivially give $\dim_H(X)\geq \alpha$.

Regarding the dimension, note that there is a unique value $\alpha^*\in [0,+\infty]$ such that 
  \begin{align*}
\liminf_{n\to \infty} n\big(d(T^n(x),x)\big)^\alpha = +\infty& \mbox{\quad for all $\alpha<\alpha^*$\quad and}\\
\liminf_{n\to \infty} n\big(d(T^n(x),x)\big)^\alpha = 0& \mbox{\quad for all $\alpha>\alpha^*$.}
\end{align*}
Theorem \ref{thm:main} now gives that $\dim_H(X)\geq \alpha^*$.

\medskip

Note that \emph{a~priori} we may take any map on the space, as long as it preserves some Borel, probability, ergodic measure $\mu$. However, we ought to take a map with \emph{poor} mixing properties because of a result that requires another well-known definition. 

\begin{defin}We say that a dynamical system has an \emph{exponential decay of correlations} in Lipschitz--continuous functions (denoted by $\mathcal{L}$), if 
there exist $\gamma \in (0,1)$ and $C<+\infty$, such that for all $g\in \mathcal{L}$, all $f \in L_1(\mu)$ and every $n\in \N$, we have 
\begin{equation}\label{eq:decaydef}
\left|\mu\left(f\circ T^n\cdot g\right)-\mu(g)\cdot\mu(f)\right| \leq C\gamma^n||g||_\mathcal{L}\mu(|f|),
\end{equation}
where $||\cdot||_\mathcal{L}$ denotes the typical norm of the space of Lipschitz functions.
\end{defin}
The simplified version (stronger assumptions) of Theorem~3.1. from \cite{Paw} states that
\begin{thm}
With the assumptions on the dynamical system as above, if $\mu \approx H_\alpha$ and the system has an exponential decay of correlation in Lipschitz--continuous functions, then
 \begin{equation}\label{eq:thloglog}
\liminf_{n\to \infty} \;\left(n\ln\ln n \right)^{1/\alpha}d(T^n(x),x) =0.
\end{equation}
\end{thm}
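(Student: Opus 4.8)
The plan is to deduce the statement from a second, quasi--independent Borel--Cantelli argument, with the exponential decay of correlations supplying the independence. Fix $\varepsilon>0$, put $r_n=\varepsilon\,(n\ln\ln n)^{-1/\alpha}$ and $A_n=\{x:\ d(T^n x,x)<r_n\}$. Since $\liminf_n (n\ln\ln n)^{1/\alpha}d(T^n x,x)=0$ holds at $x$ exactly when $x\in\limsup_n A_n$ for every $\varepsilon$ in a fixed sequence $\varepsilon_j\downarrow 0$, and a countable intersection of conull sets is conull, it suffices to prove $\mu(\limsup_n A_n)=1$ for each fixed $\varepsilon$. I would extract this from the Kochen--Stone lemma, which bounds $\mu(\limsup_n A_n)$ below by $\limsup_N \big(\sum_{n\le N}\mu(A_n)\big)^2\big/\sum_{m,n\le N}\mu(A_m\cap A_n)$: if the quasi--independence established below is sharp this ratio tends to $1$, and in any case ergodicity of $\mu$ promotes a positive value to $1$.

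The divergence half is routine. Covering $X$ by balls $B_i$ of radius $\rho_n\asymp r_n$ with bounded overlap, the inclusion $\{x\in B_i,\ T^nx\in B_i\}\subset A_n$ together with $\mu\approx H_\alpha$ (so $\mu(B_i)\asymp\rho_n^\alpha$ and there are $\asymp\rho_n^{-\alpha}$ balls) reduces $\mu(A_n)$ to $\sum_i\mu(B_i\cap T^{-n}B_i)$. Applying \eqref{eq:decaydef} to Lipschitz approximations of the $\1_{B_i}$, whose $\mathcal L$--norm is $\asymp\rho_n^{-1}$, gives $\mu(B_i\cap T^{-n}B_i)=\mu(B_i)^2+O(\gamma^n\rho_n^{-1}\mu(B_i))$; after summing the total error is $O(\gamma^n\rho_n^{-1})$, which is summable, while the main term yields $\mu(A_n)\gtrsim r_n^\alpha$. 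Hence $\sum_n\mu(A_n)\gtrsim\varepsilon^\alpha\sum_n (n\ln\ln n)^{-1}=\infty$.

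The heart is the pair estimate for $\mu(A_m\cap A_n)$ with $m<n$. The difficulty is that both events are anchored at time $0$ (the point $x$ itself), so they cannot be split by the spectral gap in the obvious way. I would first localise $x$ in a ball $B_i$ of the finer radius $\rho_n$, which turns each self--return into a genuine hitting event: up to enlarging balls, $A_m\cap A_n$ becomes $\bigcup_i\{x\in B_i,\ T^mx\in \hat B_i,\ T^n x\in \hat B_i\}$, where only the latest time $n$ now carries the long gap $n-m$. Two uses of \eqref{eq:decaydef} --- peeling off the time-$0$ factor $\1_{B_i}$ across the gap $m$, then decoupling the times $m$ and $n$ across the gap $n-m$ --- factorise the main term, and summing over $i$ gives $\mu(A_m\cap A_n)=\mu(A_m)\mu(A_n)+(\text{error})$.

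The main obstacle is controlling the accumulated errors, and this is precisely where the $\ln\ln n$ is spent. The dangerous contributions are the near--diagonal ones, where $n-m$ is too small for $\gamma^{\,n-m}$ to beat the Lipschitz norms $\asymp\rho_n^{-1}$, i.e.\ where mixing has not yet taken effect and the crude bound $\mu(A_m\cap A_n)\le\mu(A_n)$ is far too lossy. The way through is to note that a double close return at nearby times $m$ and $n=m+k$ forces $T^{k}(T^mx)\approx T^m x\approx x$, so that $x$ is itself a close return of the \emph{short} time $k$ --- an event of small measure --- which replaces the crude bound by a summable one. Balancing this near--diagonal band against the off--diagonal product sum is exactly what dictates the extra factor: at the bare rate $n^{-1/\alpha}$ the correlations are too strong and Kochen--Stone returns only a positive constant, consistent with the finite bound of Theorem~\ref{thm:main}, whereas shrinking the targets by precisely $(\ln\ln n)^{-1/\alpha}$ makes $\sum_{m,n\le N}\mu(A_m\cap A_n)=(1+o(1))\big(\sum_{n\le N}\mu(A_n)\big)^2$, so the Kochen--Stone ratio tends to $1$ and the $\liminf$ collapses to $0$. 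Carrying out this variance estimate with all constants is the technical core of Theorem~3.1 of \cite{Paw}.
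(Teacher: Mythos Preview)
There is nothing to compare against: the paper does not prove this theorem. It is quoted verbatim as ``the simplified version of Theorem~3.1 from \cite{Paw}'' and used only as motivation for choosing a non--mixing map in Section~\ref{sec:ex}; no argument is supplied. Your proposal is therefore not an alternative to the paper's proof but a sketch of what the cited paper \cite{Paw} presumably does, and indeed you say as much in your last sentence.

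As a sketch of the Kochen--Stone/second--moment route it is broadly reasonable, but two points would need attention if you actually wrote it out. First, the step ``$\mu(B_i)\asymp\rho_n^{\alpha}$'' is not a consequence of the bare hypothesis $\mu\approx H_\alpha$ (mere equivalence of measures); it requires Ahlfors $\alpha$--regularity of $\mu$, which is an additional assumption that the ``simplified version'' has silently absorbed. Second, the claim that ergodicity ``promotes a positive value to $1$'' is not automatic, since $\limsup_n A_n$ is not obviously $T$--invariant; one either shows the Kochen--Stone ratio is asymptotically $1$ (so no promotion is needed), or one argues invariance of the tail event $\{\liminf_n(n\ln\ln n)^{1/\alpha}d(T^nx,x)=0\}$ via uniform continuity of $T$ and a change of index $n\mapsto n-1$.
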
 
Which is the opposite of what we want (namely a positive lower limit). Thus, for the map to be useful to our method it needs to be slowly mixing. Typical examples of such maps include the irrational rotations on $\mathcal{S}^1$, Feigenbaum maps or the \emph{adding machine} map, which we utilise below.

\section{Example}\label{sec:ex}

Our example will be arguably the simplest of fractal sets -- the one-third Cantor set. We will estimate from below the dimensional density $g(x)$ for all values of $\alpha$. As it turns out, we will get a meaningful result for $\alpha$ equal to the Hausdorff dimension of the Cantor set, leading to a bound on the Hausdorff dimension and the Hausdorff measure, both from below.

As mentioned, we will utilise a so-called \textit{adding machine} map. We recall the definition now.

\medskip

Every point $x$ in the Cantor set $C$ has a unique coding $(x_n)_{n=1}^{\infty}$ using symbols 0 and 1. The first symbol is $0$ if the point is to the left of $1/2$ and $1$ if it is to the right. The second symbol decides if the point is on the left or on the right of the second level segments, etc. The relation between coding and the point on the real line  is  $\displaystyle x=\sum\limits_{n=1}^{+\infty} \frac{2x_n}{3^n}$. It follows that the (Euclidean) distance between points $x$ and $y$ is given by a formula $\displaystyle |x-y|= 2\left|\sum\limits_{n=1}^{+\infty} \frac{x_n-y_n}{3^n}\right|$. 

The map $T$ on the coding space, is defined by an inductive scheme:
%\begin{quote}
%A) If the first symbol $x_1$ equals 0, then add 2 to it (new $(Tx)_1=2$) and finish.\linebreak 
%B) If the symbol is 2, then make it equal 0 (new $(Tx)_1=0$) and repeat  
%\end{quote}
\begin{enumerate}[A)]
  \item Start with the first symbol: $n=1$.
	\item If the symbol $x_n= 0$, then add 1 to it (new $(Tx)_n=1$) and finish.
	\item If the symbol $x_n=1$, then make it equal to 0 (new $(Tx)_n=0$), increase $n$ by 1, and return to (B).
\end{enumerate}
In other words --- we scan the code for the first digit of $(x_n)$ equal to $0$, set it to $1$ and set all the previous digits (i.e. $(x_k)$ for $k<n$) to $0$.

Note that this `program' will run indefinitely, if our point $x$ has code $[111\ldots]$ (i.e. if $x=1$), but mathematically this is not an issue as we may set $T(1)=0$.

This map is called an \emph{adding machine}, because it is equivalent to adding 1 to the first digit of a binary number, where the digits are written in reverse order.
This transformation is a piecewise isometry and it preserves the Cantor measure $\mu$ (defined to be equally distributed on the cylinders of the same level/size).  

\begin{figure}[ht]
\centering
%\scalebox{1} % Change this value to rescale the drawing.
{
\begin{pspicture}(0,-1.62)(11.144375,1.62)
\psline[linewidth=0.04cm](0.0,0.0)(1.2,0.0)
\usefont{T1}{ptm}{m}{n}
\rput(7.8478127,-0.29){10..}
\usefont{T1}{ptm}{m}{n}
\rput(0.586875,-0.29){00..}
\usefont{T1}{ptm}{m}{n}
\rput(3.055625,-0.29){01..}
\usefont{T1}{ptm}{m}{n}
\rput(10.731093,-0.29){111..}
\psline[linewidth=0.04cm](7.2,0.0)(8.4,0.0)
\usefont{T1}{ptm}{m}{n}
\rput(9.531094,-0.29){110..}
\psline[linewidth=0.04cm](2.4,0.0)(3.6,0.0)
\psline[linewidth=0.04cm](9.6,0.0)(10.0,0.0)
\psline[linewidth=0.04cm](10.4,0.0)(10.8,0.0)
\psbezier[linewidth=0.04,arrowsize=0.05291667cm 2.0,arrowlength=1.4,arrowinset=0.4]{->}(0.6,0.4)(1.4,1.6)(7.0,1.6)(7.8,0.4)
\psbezier[linewidth=0.04,arrowsize=0.05291667cm 2.0,arrowlength=1.4,arrowinset=0.4]{->}(7.8,-0.6)(7.6,-1.2)(3.2,-1.2)(3.0,-0.6)
\psbezier[linewidth=0.04,arrowsize=0.05291667cm 2.0,arrowlength=1.4,arrowinset=0.4]{->}(3.0,0.4)(3.8,1.6)(9.4,1.6)(10.2,0.4)
\psbezier[linewidth=0.04,arrowsize=0.05291667cm 2.0,arrowlength=1.4,arrowinset=0.4]{->}(9.8,-0.6)(9.8,-1.4)(1.0,-1.4)(1.0,-0.6)
\psbezier[linewidth=0.04,arrowsize=0.05291667cm 2.0,arrowlength=1.4,arrowinset=0.4]{->}(10.6,-0.6)(10.6,-1.6)(0.2,-1.6)(0.2,-0.6)
\end{pspicture} 
}
\caption{\emph{Adding machine} transformation on a Cantor set. The map in the neighbourhood of the point $1111\ldots$ is drawn only up to the cylinder of length 3.}
\end{figure}

Let us start computing the recurrence rate by taking the point $z^0=0=[0000\ldots]$ and denote the forward iterates as $T^n(z^0)=z^n$. 
\[z^1=\frac{2}{3}=[100\ldots],\;z^2=\frac{2}{9}=[010\ldots],\;z^3=\frac{8}{9}=[110\ldots],\;z^4=\frac{2}{27}=[0010\ldots].\]

To calculate the lower limit (LHS) of \eqref{eq:thmain} we only need to look at the subsequent \emph{closest returns}, i.e. we can ignore all $n$ for which there exists $k<n$ such that $|T^k(z)-z|\leq|T^n(z)-z|.$ For our point $z^0$ (and in fact any starting point) it is obvious that those returns will occur for the iterates being powers of~$2$. More precisely,
\begin{align*}\left|T^{2^n}(z^0)-z^0\right|=\frac{2}{3^{n+1}} &\mbox{\hskip1cm for all $n\geq 0$, }\\
\left|T^{k}(z^0)-z^0\right|> \frac{2}{3^{n+1}} &\mbox{\hskip1cm for all $0<k<2^{n}$}.
\end{align*}
Taking any $\alpha>0$ we get the following
\begin{equation}\label{eq:cant1} \liminf_{k\to +\infty} k\left|T^k(z^0)-z^0\right|^\alpha = \lim_{n\to+\infty} 2^n\Big(\frac{2}{3^{n+1}}\Big)^\alpha = \lim_{n\to+\infty} \frac{2^\alpha}{3^\alpha}\left(\frac{2}{3^{\alpha}}\right)^n. \end{equation} 

Obviously, $z^0$ is not a typical point in this system. However, the general calculation is not that different. Take any point $x\in C$ and look at its code -- $[x_1x_2x_3\ldots]$. As before, we only need to look at iterates that are of form $2^n$. The point $T^{2^n}(x)$ will have the first $n$ symbols identical and the $(n+1)$--st symbol will be different. What we do not control/know are the later symbols, which can lower the distance slightly, e.g. the distance between $[100\ldots]$ and $[010\ldots]$ is equal to $4/9$. However, it is easy to write down all the possibilities. 
\begin{align*}\left|T^{2^n}(x)-x\right|=\frac{2}{3^{n+1}} &\mbox{\hskip1cm if $x_{n+1}=0$, }\\
\left|T^{2^n}(x)-x\right|=\frac{4}{3^{n+2}} &\mbox{\hskip1cm if $x_{n+1}=1$ and  $x_{n+2}=0$,}\\
\left|T^{2^n}(x)-x\right|> \frac{2}{3^{n+1}} &\mbox{\hskip1cm if $x_{n+1}=1$ and  $x_{n+2}=1$. }
\end{align*}
To sum up -- the \emph{worst case} is when we add $1$ at the place where there is a symbol $1$ followed by a $0$. 

%\begin{rem}\label{rem:impconst}
%Later, we will see that modifying the map gives back the constant $\frac{2}{3}$ (that we had for $z^0$) for almost every point.
%\end{rem}

Repeating \eqref{eq:cant1} for a general point we get a slightly worse estimate
\begin{equation}\label{eq:cant2} \liminf_{k\to +\infty} k\left|T^k(x)-x\right|^\alpha \geq \lim_{n\to +\infty}\Big(\frac{4}{9}\Big)^\alpha\left(\frac{2}{3^{\alpha}}\right)^n=\lim_{n\to +\infty}\Big(\frac{4}{9}\Big)^\alpha\left(3^{\log_32-\alpha}\right)^n.\end{equation}
So if we take any $\alpha<\log_32$, we see that the lower limit is infinite so by using Boshernitzan's result \eqref{eq:wstepbosh1} we know that the Hausdorff measure $H_\alpha(C)$ is infinite, so the Hausdorff dimension $\mathrm{HD}(C)\geq \log_32$. 

Take $\alpha=\log_32$ and the Cantor measure $\mu$. Now Thm.~\ref{thm:main} gives that $g(x)\geq \big(\frac{4}{9}\big)^\alpha$ for all $x$ (where $g(x)=\frac{dH_\alpha}{d\mu}$). So
\begin{equation}\label{wymCant}
H_{\log_32}(C) = \int_C g(x) d\mu(x) \geq \mu(C)\left(\frac{4}{9}\right)^{\log_32}\approx 0.6.	
\end{equation}
This is not a very strong result --- in reality $H_{\log_32}(C)=1$, but on the other hand, the estimate has been acquired with little effort. The next section is dedicated to comments on improving this lower bound.

\medskip

Note that, it is easy to apply this technique to other self-similar sets, which allow symbolic coding, e.g. the Sierpi\'nski triangle. Unfortunately, the \emph{unoptimality} of the lower bound may (and typically will) remain.

On the other hand, the coding is not strictly necessary. If a system has slow recurrence properties, then one could get meaningful results as well. An example of this is in a paper with M.~Urba\'{n}ski \cite{PU}, where the underlying dynamics is that of an irrational rotation on a circle (which for numbers with bad Diophantine properties is in fact slowly recurrent).

\section{Improvements and comments} \label{sec:impr}
\subsection{Changing the metric}
In the calculation above we used the Euclidean metric on the real line. However, on the Cantor set there is another natural metric, coming from the symbolic representation. Define 
\[d(x,y) = d\big((x_n),(y_n)\big) = 3^{1-\min\{k\geq 1\, :\, x_k\neq y_k \}}.\]
Then the diameter of $C$ stays equal to 1. Also, the diameters of all the cylinder sets in this metric is equal to the diameters in the Euclidean one. And the Hausdorff measure (and dimension) are exactly as it was in the Euclidean case.

Let us check what happens to our recurrence estimates if we take this metric.

\noindent For any $z\in C$ we trivially get
\begin{align*}d\big(T^{2^n}(z), z\big)=\frac{1}{3^{n}} &\mbox{\hskip1cm for all $n\geq 0$, }\\
d\big(T^{k}(z), z\big)\geq \frac{1}{3^{n}} &\mbox{\hskip1cm for all $0<k<2^{n}$}.
\end{align*}

Inserting this into the \emph{liminf} estimates yields

\begin{equation} \liminf_{n\to +\infty} k\left|T^k(x)-x\right|^\alpha = \lim_{n\to +\infty}2^n\big(3^{-n}\big)^\alpha=\lim_{n\to +\infty}\big(3^{\log_32-\alpha}\big)^n.\end{equation}
Now, setting $\alpha=\log_32$ we get the estimate on the density $g(x)\geq1$, which in turns gives $H_\alpha(C)\geq1$.

We see that using this metric we get the optimal estimate. 
 
%\subsection{Changing the map}
%Returning to the case of the Euclidean metric, we might also try changing the map and check if this improves (slows down) the return speed.

%Take all the cylinders of generation $s\in \N$ and order them \emph{from left to right}. Any permutation $\pi$ of the set $\{1,2, \ldots, 2^s\}$ gives a map on the Cantor set by letting the cylinder (interval) of number $u$ be translated onto the cylinder of number $\pi(u)$. It trivially preserves the Cantor measure.
%  As any such is periodic the lower limit is 0 and t

\subsection{Irremovable obstacle}
Let us return to the Euclidean metric. One could ask a very natural question -- would some different map yield a better estimate?

And while it is possible that there exists a map with even slower recurrence, there does not seem to be any chance of improving up to the optimal lower bound. This is shown by a result of Boshernitzan and Delecroix, \cite{BD}, which we will utilise below.

To see the problem, let us try to apply our method to a circle $S$ of length 1. To get the best bound we need to find a map $T$ on the circle (preserving some probability measure $\mu$)  for which
\begin{equation}
\liminf_{n\to \infty} nd(T^n(x),x) \geq 1,
\label{eq:excircle}
\end{equation}
for $\mu$-a.e. $x\in S$. This would prove that $H_1(S)\geq 1$.

First, let us see what should we assume on the measure. Its support needs to be the entire circle (we get nonsense otherwise). Also, the dimension of the measure needs to be 1 (reason as before). Finally, as the circle is geometrically identical at any point so should be the measure -- leaving as only with the Lebesgue measure. \emph{The last argument is not precise at all, but we are not actually proving anything here, so for the sake of clarity let us leave it like that.}

There is still plenty of maps preserving the Lebesgue measure. The simplest of those are the rotations by angle $\gamma$, denoted by $R_\gamma$. Then the recurrence speed does not depend on the starting point, but on the continued fraction expansion of $\gamma$. It is very well studied subject. By the classic result of Khinchin we know that the slowest return speed happens for the rotation by the golden mean (minus one) $\varphi = \frac{\sqrt{5}-1}{2}$. Khinchin's Theorem also states that
\begin{equation}
\liminf_{n\to \infty} nd(R_\gamma^n(x),x) \leq \frac{1}{\sqrt{5}},
\label{eq:Khinch}
\end{equation} 
with the equality for $\varphi$.

This shows that taking only the rotations, we have no chance of realising \eqref{eq:excircle}. And Boshernitzan and Delecroix generalise this to prove (in \cite{BD}) that inequality \eqref{eq:Khinch} is true for \emph{all} maps preserving the Lebesgue measure. This shows that the method shown here as han irremovable obstacle in achieving the best bound. At least on the circle, but their proof suggest this would happen on every space. 

Actually, their proof indicates that there exists a constant \emph{correction term} (depending on the dimension of the space, and perhaps slightly on the geometry of the space) which one could apply to get the correct measure (for the circle this would be $\frac{1}{\sqrt{5}}$). Unfortunately, making this argument precise would require very general results on the optimal packing of points in rather arbitrary sets.
 
\subsection{Dependence on the dimension}

Their proof also suggests that the scale of the \emph{unoptimality} of the lower bound (i.e. the difference between the obtained result and the true Hausdorff measure) depends on the dimension. In fact, it should shrink to zero as the dimension goes to zero. 

We cannot prove this general result here. What we can do, however, is show the phenomenon for basic Cantor sets.

\medskip 

Let us compute the lower bound on the measure of the Cantor sets of varying dimensions.
Fix $0<a<\frac{1}{2}$. The Cantor set $C_a$ in question is given by the maps:
\begin{equation}
	f_0(x)=ax, \qquad f_1(x)=ax+(1-a)=a(x-1)+1.
\end{equation}

The Hausdorff dimension of this set is trivially computed $\dim_H(C_a)=\frac{\log(2)}{-\log(a)}$.

We define the coding as before, take the same adding machine map and repeat the calculation as in the example. We get for any $x\in C_a$ (where $1-2a$ is the size of the gap between intervals of the same order, to which we may add the length of the next-level cylinder $a^2$)
\begin{equation}
\left|T^{2^n}(x)-x\right|\geq a^{n}(1-2a+a^2).
\end{equation}
where the inequality becomes equality for those $n$'s when $x_n=1$ and $x_{n+1}=0$ (exactly as before). Putting this into the lower limit yields
\begin{equation} 
\liminf_{k\to +\infty} k\left|T^k(x)-x\right|^\alpha \geq \lim_{n\to +\infty}2^n\left(a^n(1-2a+a^2)\right)^\alpha=\lim_{n\to +\infty}(1-a)^{2\alpha}\left(2^{1+\alpha\log_2(a)}\right)^n.
\end{equation}
Put $\alpha=\frac{\log(2)}{-\log(a)}$, so that the sequence is constant. Then we see
	\begin{equation}
	H_\alpha(C_a)\geq (1-a)^{2\alpha}.
\end{equation}
This expression goes to one as $a$ goes to zero (note that then so does $\alpha$). Thus the difference between the true Hausdorff measure (equal to one in this case) and our estimate does disappear in the limit.

\section{Proofs} \label{sec:proof}
The proof of Theorem \ref{thm:main} is divided into a few steps. First we prove 
\begin{prop} \label{fingrandens}
With the assumptions on the dynamical system as above, in addition suppose that $H_\alpha \ll \mu$ for some $\alpha>0$, and denote the corresponding density by $g:=\frac{dH_\alpha}{d\mu}$. Then for $\mu$ -- almost every $x\in X$ we have
\begin{equation}
\liminf_{n\to \infty} \;n^{1/\alpha}d(T^n(x),x) \leq (\esssup g)^{1/\alpha}.
\end{equation}
\end{prop}

\begin{xrem} Note that $g$ is the inverse of the usually taken density.\end{xrem}

\begin{proof} First, if $g$ is unbounded, then the inequality is trivial. Denote $\beta:=\frac{1}{\alpha}$ and $s:=\esssup\, g <+\infty$. In this notation we need to show that $\mu(D)=0$, where
\begin{equation}
D = \{x\in X : \liminf_{n\to\infty} \;n^\beta d(T^n(x),x) > s^\beta\}.
\end{equation}
%Define a monotone sequence of sets %\begin{align}\nonumber D(\varepsilon):=\{x\in X : n^\beta d(T^n(x),x) > \left(\frac{c}{1-\varepsilon}\right)^\beta,&\\ \mbox{ for all $n\geq 1$ such that } &d(T^n(x),x) < \varepsilon\}. \end{align}
Take any $\varepsilon>0$ and define   
\begin{equation}
D_\varepsilon:=\{x\in X : n^\beta d(T^n(x),x) > \left(\frac{s}{1-\varepsilon}\right)^\beta, \mbox{ for all $n\geq 1$ such that } d(T^n(x),x) < \varepsilon\}.
\end{equation}
It suffices to show that for any $\varepsilon >0$, this set has measure zero. 

Assume the opposite, i.e. $\mu(D_\varepsilon) >0$ for some fixed $\varepsilon$. We will prove that this implies $H_\alpha(D_\varepsilon) >0$. 

Define $\tau(x)$ as the first return map of a point $x$ into $D_\varepsilon$. This map preserves the conditional measure $\nu$, defined as
\begin{equation}
\nu(A) = \frac{\mu(A\cap D_\varepsilon)}{D_\varepsilon}.
\end{equation}
Now, if $H_\alpha(D_\varepsilon) =0$, then by a result of Boshernitzan cited in the introduction \eqref{eq:wstepbosh2}, we have 
\begin{equation}\liminf_{k\to+\infty} k^\beta d(\tau^k(x),x)=0 \mbox{ for $\nu$--a.e. $x \in D_\varepsilon$}.\label{eq:proofbosh}\end{equation}
Denote by $n_k(x)$ the time of $k$-th return of $x$ to $D_\varepsilon$. Then $\tau^k(x) = T^{n_k(y)}(x)$ and also \begin{equation} \lim_{k\to\infty} \frac{k}{n_k(x)} = \mu(D_\varepsilon) \end{equation} for $\nu$--a.e. $y$ because of the ergodic theorem.
Combining the two limits above we get
\begin{equation}
\liminf_{k\to+\infty} \mu(D_\varepsilon)^\beta \cdot n_k(x)^\beta d(T^{n_k(x)}(x),x)=0 \mbox{ for $\nu$--a.e. $x \in D_\varepsilon$}
\label{eq:proofbosh2}
\end{equation}
which is contradicts the definition of $D_\varepsilon$. Thus we know that $H_\alpha(D_\varepsilon)>0$. 

As the Hausdorff measure is positve, there must exists a measurable, non-empty subset $U\subset D_\varepsilon$ with $\operatorname{diam} U<\varepsilon$ satisfying $ (1-\varepsilon)(\operatorname{diam} U)^\alpha \leq H_\alpha(U)$. (If all subsets of $D_\varepsilon$ satisfy the opposite inequality, then this trivially violates the definition of the Hausdorff measure). Put $u=\mu(U)$ and $r=\operatorname{diam} U$. 
From the definition of density $g$ 
\begin{equation} \nonumber H_\alpha(U) = \int_X \1_{U} \,dH_\alpha = \int_X g \1_{U} \,d\mu \leq \esssup g \cdot \mu(U)= s\mu(U).\end{equation} 
Combining the two inequalities on $U$ gives 
\begin{equation} \label{uka}
 (1-\varepsilon) r^\alpha \leq s\cdot u.
\end{equation}
Since $T$ preserves $\mu$, we can show that 
\begin{equation} \label{tt13}
T^{-n}U \cap U \neq \emptyset \mbox{ for some } n \leq \frac{1}{u}.
\end{equation}
Indeed, if all those intersections were empty, then $U$, $T^{-1}(U)$ \ldots $T^{-[1/u]}(U)$ would be pairwise disjoint, and so \[\mu\bigg(\bigcup_{i=0}^{[1/u]}T^{-i}(U)\bigg) = \bigg(\Big[\frac{1}{u}\Big]+1\bigg)\cdot u  > 1,\]
a contradiction.

Take $n$ for which the intersection is non-empty and any $x\in T^{-n}U \cap U$. Then \begin{equation}\label{tt14} d(T^n(x),x) \leq \mbox{diam }U = r<\varepsilon,\end{equation}
so this $n$ satisfies the condition in the definition of $D(\varepsilon)$.
Using  (\ref{tt13}) and (\ref{tt14}), then (\ref{uka}), we get 
\begin{align*} \nonumber n^\beta d(T^n(x),x) &\leq \left(\frac{1}{u}\right)^\beta\cdot r  \leq \left(\frac{s}{1-\varepsilon}\right)^\beta\frac{1}{r}\cdot  r=\left(\frac{s}{1-\varepsilon}\right)^\beta, 
\end{align*}
which contradicts the the definition of $D_\varepsilon$ and ends the proof.
\end{proof} 

As the next step we will 'localize' the result above obtaining:
\begin{prop}\label{lokcor}
With the assumptions as above, we have for $\mu$ -- almost every $x\in X$
\begin{equation}
\liminf_{n\to \infty} \;n^{1/\alpha}d(T^n(x),x) \leq g(x)^{1/\alpha}.
\end{equation}
\end{prop}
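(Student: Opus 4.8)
The plan is to reduce the statement to a measure-zero assertion and then to localise the argument of Proposition \ref{fingrandens} by slicing along the values of $g$. Writing $\beta:=1/\alpha$, it suffices to prove that $\mu(D)=0$, where $D=\{x\in X:\liminf_{n\to\infty} n^\beta d(T^n(x),x)>g(x)^\beta\}$. The first step is the decomposition
\[
D=\bigcup_{c\in\mathbb{Q},\,c>0} D_c,\qquad D_c:=\{x:g(x)<c \text{ and } \liminf_{n} n^\beta d(T^n(x),x)>c^\beta\},
\]
which holds because for $x\in D$ one can slip a rational $c^\beta$ strictly between $g(x)^\beta$ and the liminf. Since $g$ and $x\mapsto\liminf_{n} n^\beta d(T^n(x),x)$ are Borel, each $D_c$ is measurable, and it is enough to show $\mu(D_c)=0$ for every positive rational $c$.

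For a fixed $c$ I would then run the proof of Proposition \ref{fingrandens} essentially verbatim, with the constant $\esssup g$ replaced throughout by the threshold $c$. Define the increasing family $D_c(u):=\{x\in D_c:n^\beta d(T^n(x),x)>c^\beta+u \text{ for every } n\geq1 \text{ with } d(T^n(x),x)<u\}$, note that $D_c=\bigcup_k D_c(1/k)$ exactly as before, and assume for contradiction that $\mu(Y)>0$ for $Y:=D_c(u_0)$. Then $H_\alpha(Y)>0$ by equivalence of the measures, the set $Y$ contains no periodic points (a periodic point would force the liminf to vanish), so $\mu$ is nonatomic on $Y$ and the continuity lemma applies, yielding a small set $U\subset Y$ with $\operatorname{diam}U<\delta<u_0$ and $(1-\varepsilon)(\operatorname{diam}U)^\alpha\leq H_\alpha(U)$.

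The single place where the global bound entered the original argument is the density estimate $H_\alpha(U)\leq(\esssup g)\,\mu(U)$ on that small set $U$, and this is exactly where the localisation pays off: because $U\subset Y\subset\{g<c\}$, I can compute $H_\alpha(U)=\int_U g\,d\mu\leq c\,\mu(U)$ directly, with no reference to any global supremum. From here the return-time pigeonhole on $U,T^{-1}U,\ldots,T^{-n}U$, which uses only the $T$-invariance of $\mu$ on all of $X$, together with the final chain of inequalities, is unchanged and produces a point $x\in Y$ with $n^\beta d(T^n(x),x)\leq\bigl(\tfrac{1+\varepsilon}{1-\varepsilon}\bigr)^\beta c^\beta<c^\beta+u_0$, contradicting $x\in D_c(u_0)$.

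I expect the main obstacle to be conceptual rather than computational: one is tempted to deduce Proposition \ref{lokcor} by applying Proposition \ref{fingrandens} to the system restricted to a level set of $g$, but this fails, since $T$ does not preserve such a set and the restricted measure is not invariant. The resolution is to notice that the dynamical ingredient, namely the pigeonhole exploiting invariance of $\mu$ on the whole space, never needs localising; only the static comparison between $H_\alpha$ and $\mu$ on the one set $U$ does, and that is guaranteed the moment $U$ is forced to lie in $\{g<c\}$. Hence the slicing must be built into the definition of the bad set $D_c$ and the proof re-run, rather than invoking the earlier proposition as a black box. As a byproduct the boundedness hypothesis on $g$ turns out to be unnecessary, since $\{g=\infty\}$ contributes nothing (there the asserted bound is vacuous) while $\{g<\infty\}=\bigcup_c\{g<c\}$ is covered by the decomposition.
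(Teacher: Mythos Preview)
Your argument is correct, and it is genuinely different from the paper's. The paper does \emph{not} slice along level sets of $g$; instead it fixes a ball $B(x,r)$, passes to the first-return map $S$ on that ball with the conditional measure $\nu$, applies Proposition~\ref{fingrandens} as a black box to the induced system (where $\esssup h=\mu(B(x,r))\cdot\esssup g|_{B(x,r)}$), and then uses the ergodic theorem to convert $k^{1/\alpha}d(S^k y,y)$ into $n^{1/\alpha}d(T^n y,y)$ via $k/n_k(y)\to\mu(B(x,r))$; letting $r\to0$ gives the local esssup of $g$. So your anticipated obstacle (``one cannot restrict to a sublevel set because $T$ does not preserve it'') is precisely what the paper \emph{does} circumvent, just by inducing on balls rather than on level sets.

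Your route buys two things: it avoids induced systems and the ergodic theorem entirely (so ergodicity is never used, only invariance), and it yields the bound with the pointwise Radon--Nikodym derivative rather than its upper semicontinuous regularization $\lim_{r\to0}\esssup g|_{B(x,r)}$ that the paper uses---a slightly sharper statement, and one that does not require $g$ to be globally bounded. The paper's route has the virtue of invoking Proposition~\ref{fingrandens} as an honest black box and making the word ``localisation'' literal. One small point: your identity $H_\alpha(U)=\int_U g\,d\mu$ tacitly reads $g$ as the Radon--Nikodym derivative; this is fine, but note it is not the same object as the paper's $g(x)$ in the Remark, though the implication goes the right way.
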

\begin{rem} The density $g$ is defined only almost everywhere, so $g(x)$ really means
\begin{equation}\nonumber
g(x) = \lim_{r\to 0} (\esssup g|_{B(x,r)}).
\end{equation}\end{rem}
\begin{rem}
This result for $X=[0,1]$ ($\alpha=1$), has been proved in \cite{Choe}. That proof, however, works only in a 1-dimensional space.
\end{rem}

\begin{proof}[Proof of Proposition \ref{lokcor}] We will use basic ergodic properties as in the part of the previous proof. Fix $x$ and $r>0$ and consider $S(y)$ --- the first return function to the ball $B(x,r)$. $S$ preserves the conditional measure $\nu$, defined as
\begin{equation}
\nu(A) = \frac{\mu(A\cap B(x,r))}{\mu(B(x,r))}.
\end{equation}
The density of this new measure is related to the \emph{old} density: 
\begin{equation}\label{finwzornah} h = \frac{dH_\alpha}{d\nu} = g|_{B(x,r)} \cdot \mu(B(x,r)). \end{equation}
Using Proposition \ref{fingrandens} for a system $\left(B(x,r), \mathcal{F}|_{B(x,r)}, \nu, d|_{B(x,r)}, S\right)$ we get
\begin{equation} \label{finlimzsk}
\liminf_{k\to \infty} \;k^{1/\alpha}d(S^k(y),y) \leq (\esssup h)^{1/\alpha}.
\end{equation}
Denote by $n_k(y)$ the time of $k$-th return of $y$ to $B(x,r)$. Then $S^k(y) = T^{n_k(y)}(y)$ and from ergodic theorem \begin{equation} \lim_{k\to\infty} \frac{k}{n_k(y)} = \mu(B(x,r)) \mbox{\quad for $\mu$--a.e. $y$.}\end{equation}  Note that the closest returns (to itself) of a point $y\in B(x,r)$ for the original system have to occur within the sequence $n_k(y)$. Thus, the limit in (\ref{finlimzsk}) transforms to
\begin{equation} \label{finlimzsk2}
\liminf_{k\to \infty} \;\left(\frac{k}{n_k(y)}\right)^{1/\alpha} n_k(y)^{1/\alpha}\cdot d(T^{n_k(y)}(y),y)\geq \mu(B(x,r))^{1/\alpha}\cdot\liminf_{n\to \infty} \;n^{1/\alpha}d(T^n(y),y).
\end{equation}
It remains to compile (\ref{finwzornah}), (\ref{finlimzsk}) and (\ref{finlimzsk2}) obtaining
\begin{equation}
\mu(B(x,r))^{1/\alpha}\cdot\liminf_{n\to \infty} \;n^{1/\alpha}d(T^n(y),y) \leq (\esssup g|_{B(x,r)})^{1/\alpha}\cdot \mu(B(x,r))^{1/\alpha}.
\end{equation}
Letting $r \to 0$ we finish the proof.\end{proof}
We may finally finish the proof of the theorem.
\begin{proof}[Proof of Theorem \ref{thm:main}]
%If $\mu \bot H_\alpha$, then the limit is zero by a result from the Introduction \cite[Thm 1.2]{Bosh}, which states that the recurrence limit vanishes if $H_\alpha(X)=0$. If $g(x)=\infty$, then the bound on the limit is trivial. Finally the remaining case is dealt with by Cor.~\ref{lokcor}.
First, if $x\notin \mathrm{supp}\, \mu$, then as $g(x)$ is infinite and the bound is trivial. If $\mu \bot H_\alpha$, then $g(x)=0$ and the limit is zero by a result of Boshernitzan cited in the introduction \eqref{eq:wstepbosh2}. Finally, the remaining case is dealt with by Proposition~\ref{lokcor}.
\end{proof}

\end{document}